\newtheorem{thm}{Theorem}
\newtheorem{cor}{Corollary}
\newtheorem{prop}{Proposition}
\theoremstyle{definition}
\newtheorem{rem}{Remark}
\newtheorem{exa}{Example}
\numberwithin{equation}{section}
\newcommand\A[1]{A_{#1}}
\newcommand\R{\mathbb{R}}
\newcommand\N{\mathbb{N}}
\newcommand{\sr}[1]{\mathfrak{M}_{#1}}
\newcommand{\B}[1]{B_{#1}}
\newcommand{\norma}[1]{\left\| #1 \right\| }
\newcommand{\abs}[1]{\left| #1 \right| }
\DeclareMathOperator{\sign}{sign}
\begin{document}


\baselineskip=17pt



\title{A new estimate of the difference among quasi-arithmetic means}

\author{Pawe{\l} Pasteczka}
\address{Institute of Mathematics\\University of Warsaw\\
02-097 Warszawa, Poland}
\email{ppasteczka@mimuw.edu.pl}

\date {October 27, 2013}

\begin{abstract}
In the 1960s Cargo and Shisha proved some majorizations for the distance among 
quasi-arithmetic means (defined as $f^{-1}\left( \sum_{i=1}^{n} w_i f(a_i)\right)$ 
for any continuous, strictly monotone function $f \colon I \rightarrow \mathbb{R}$, 
where $I$ is an interval, and $a=(a_1,\ldots,a_n)$ is a vector with entries in $I$, 
$w=(w_1,\ldots,w_n)$ is a sequence of corresponding weights $w_i>0$, $\sum w_i=1$). 

Nearly thirty years later, in 1991, P\`ales presented an iff condition 
for a sequence of quasi-arithmetic means to converge to another QA mean. It was 
closely related with the three parameters' operator $(f(x)-f(z))/(f(x)-f(y))$.

The author presented recently an estimate for the distance among such quasi-arithmetic 
means whose underlying functions satisfy some smoothness conditions. Used was the operator 
$f \mapsto f''/f'$ introduced in the 1940s by Mikusi\'nski and \L{}ojasiewicz. 
It is natural to look for similar estimate(s) in the case of the underlying 
functions \textit{not\,} being smooth. For instance, by the way of using 
P\`ales' operator. This is done in the present note. Moreover, the result 
strengthens author's earlier estimates. 

\end{abstract}

\subjclass[2010]{Primary 26E60; Secondary 26D15, 26D07}

\keywords{quasi-arithmetic means, metric}

\maketitle
\section{Introduction}
One of the most popular families of means encountered in literature is the family of 
\textit{quasi-arithmetic means}. Such a mean is defined for any continuous, strictly 
monotone function $f \colon U \rightarrow \mathbb{R}$, $U$ -- an interval. 
When $a = (a_1,\dots,a_n)$ is an arbitrary sequence of points in $U$
and $w=(w_1,\dots,w_n)$ is a sequence of corresponding \textit{weights}
($w_i>0$, $\sum w_i=1$) then the mean $\sr{f}(a,w)$ is defined by the equality
$$
\sr{f}(a,w) := f^{-1}\left( \sum_{i=1}^{n} w_i f(a_i)\right).
$$
This family was introduced in the beginning of the 1930s in a series of nearly simultaneous papers 
\cite{deFinetti,kolmogoroff,nagumo} as a generalization of the well-known \textit{power means}.

In the 1960s Cargo and Shisha \cite{CS64,CS69} introduced a metric among quasi-arithmetic means.
Namely, if $f$ and $g$ are both continuous, strictly monotone and have the same domain, then one can define 
a distance 
$$
\rho(\sr{f},\sr{g}):=\sup\{\abs{\sr{f}(a,w)-\sr{g}(a,w)} \colon a\textrm{ and }w \textrm{ admissible}\}.
$$
They also furnished some majorations for $\rho(\sr{f},\sr{g})$. One of their results is the proposition below;
hereafter $\norma{\cdot}_p$ denotes the standard $L^p$ norm ($1\le p \le \infty$).

In the present note, if not otherwise stated, the intervals are arbitrary.
\begin{prop}[\cite{CS69},Theorem~4.2]
\label{prop:CS69}
Let $U$ be an interval, $g \in \mathcal{C}(U)$ be strictly monotone, $f \in \mathcal{C}^1(U)$, $\inf \abs{f'} > 0$. 
Then $\rho(\sr{f},\sr{g})\le\frac{2\norma{f-g}_{\infty}}{\inf \abs{f'}}$.
\end{prop}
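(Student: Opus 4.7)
The plan is to fix admissible data $a$ and $w$, set $x := \sr{f}(a,w)$ and $y := \sr{g}(a,w)$, and bound $\abs{x-y}$ uniformly in terms of $\norma{f-g}_\infty$ and $\inf\abs{f'}$. The $\mathcal{C}^1$ hypothesis on $f$ enters through the mean value theorem, which converts a bound on $\abs{f(x)-f(y)}$ into a bound on $\abs{x-y}$; the main task is therefore to estimate $\abs{f(x)-f(y)}$.

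First I would use the defining identities $f(x)=\sum w_i f(a_i)$ and $g(y)=\sum w_i g(a_i)$ together with $\sum w_i = 1$ to rewrite
\[
f(x) - f(y) = \sum_{i=1}^{n} w_i\bigl(f(a_i) - g(a_i)\bigr) + \bigl(g(y) - f(y)\bigr).
\]
Bounding each term by $\norma{f-g}_\infty$ via the triangle inequality (and using $\sum w_i=1$) immediately yields $\abs{f(x)-f(y)} \le 2\norma{f-g}_\infty$.

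Next, because both $x$ and $y$ are quasi-arithmetic means of points in $U$, they lie in the convex hull of $a_1,\dots,a_n$ and hence in $U$. The mean value theorem then supplies a $\xi$ between $x$ and $y$ with $f(x)-f(y)=f'(\xi)(x-y)$, so the hypothesis $\inf\abs{f'}>0$ gives $\abs{x-y}\le \abs{f(x)-f(y)}/\inf\abs{f'}$. Combining with the previous step and taking the supremum over admissible $(a,w)$ delivers the claimed majorization of $\rho(\sr{f},\sr{g})$.

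The argument is essentially mechanical, so I do not anticipate a real obstacle: the only subtlety is verifying that $x,y\in U$ so that $f$ and $g$ are defined there and the mean value theorem applies, but this is automatic from the internality of quasi-arithmetic means. It is worth noting that the asymmetric roles of $f$ and $g$ (only $f$ need be smooth) are naturally reflected in the telescoping identity above, where one of the two error contributions comes from an averaged discrepancy $\sum w_i(f(a_i)-g(a_i))$ and the other from the single-point discrepancy $g(y)-f(y)$.
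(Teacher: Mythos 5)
Your proof is correct: the telescoping identity $f(x)-f(y)=\sum w_i\bigl(f(a_i)-g(a_i)\bigr)+\bigl(g(y)-f(y)\bigr)$, the internality of both means (so that $x,y\in U$ and the mean value theorem applies on $[x,y]\subseteq U$), and the bound $\abs{x-y}\le\abs{f(x)-f(y)}/\inf\abs{f'}$ give exactly the claimed estimate. Note that the paper itself quotes this proposition from Cargo and Shisha without proof, and your argument is the standard one for that result, so there is nothing to reconcile.
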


Using another result from \cite{CS69}, the author proved in \cite{P13} an alternative estimate for the distance between two quasi-arithmetic means satisfying some smoothness
conditions. An important tool was the operator $\A{}$, $\A{f}:=f''/f'$ introduced by Mikusi\'nski\footnote{
Mikusi\'nski and, independently, \L{}ojasiewicz, proved that comparability of quasi-arithmetic means
might be easily expressed in terms of operator $\A{}$. Besides, in mathematical economy, this operator used to be called the Arrow-Pratt index of absolute risk aversion.}
in \cite{mikusinski}. The crucial result was the following

\begin{prop}[\cite{P13}, Theorem~3]
\label{prop:rightright}
Let $U$ be a closed, bounded interval and $f,g \in C^{2}(U)$ have nonvanishing first derivatives. Then 
$$
\rho(\sr{f},\sr{g})
\le \abs{U} \exp(2\norma{A_f}_1) \sinh \left( 2 \norma{A_g-A_f}_1 \right).
$$
\end{prop}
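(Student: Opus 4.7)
The plan is to exploit the two basic features on which the statement rests: the affine invariance of quasi-arithmetic means, i.e.\ $\sr{g}=\sr{ag+b}$ for $a\ne 0$, and the fact that $A_f=(\log|f'|)'$, so the operator $\A{}$ encodes $f$ up to an affine factor. Together they suggest applying Proposition~\ref{prop:CS69} not directly to $f$ and $g$, but to $f$ and a cleverly chosen representative $\tilde g=ag+b$ of the equivalence class $[g]$, and then measuring $\|f-\tilde g\|_\infty$ in terms of $A_g-A_f$.

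First I would reduce to the case $f',g'>0$ (using that $\sr{f}=\sr{-f}$), fix a point $x_0\in U$ --- a convenient choice is the minimizer of $f'$ on $U$, so that $\inf f'=f'(x_0)$ and $\int_{x_0}^{x}A_f\ge 0$ throughout $U$ --- and then pick the unique affine transform $\tilde g=ag+b$ that satisfies $\tilde g(x_0)=f(x_0)$ and $\tilde g'(x_0)=f'(x_0)$. Writing
\[
f'(x)=f'(x_0)\exp\!\Big(\int_{x_0}^{x}A_f\Big),\qquad
\tilde g'(x)=f'(x_0)\exp\!\Big(\int_{x_0}^{x}A_g\Big),
\]
and using the identity $e^{\alpha}-e^{\beta}=2e^{(\alpha+\beta)/2}\sinh\!\big((\alpha-\beta)/2\big)$ with $\alpha=\int_{x_0}^{x}A_f$, $\beta=\int_{x_0}^{x}A_g$, one obtains a pointwise representation of $f'(x)-\tilde g'(x)$ in which the ``distance'' factor $\sinh(\cdot)$ depends only on $A_f-A_g$, while the ``size'' factor depends on $A_f+A_g$.

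Next I would bound the three integrals that appear: $\big|\int_{x_0}^{x}A_f\big|,\big|\int_{x_0}^{x}A_g\big|\le\|A_f\|_1+\|A_g-A_f\|_1$, and $\big|\int_{x_0}^{x}(A_f-A_g)\big|\le\|A_g-A_f\|_1$. Using monotonicity of $\sinh$ on $[0,\infty)$ and of the exponential, this gives a uniform upper bound for $|f'(x)-\tilde g'(x)|$ on $U$, proportional to $f'(x_0)\exp(\,\cdot\,\|A_f\|_1)\sinh(\,\cdot\,\|A_g-A_f\|_1)$. Integrating against $\mathrm{id}_U$ using $\|f-\tilde g\|_\infty=\sup_{y\in U}\big|\int_{x_0}^{y}(f'-\tilde g')\big|\le|U|\cdot\sup_{U}|f'-\tilde g'|$ then bounds $\|f-\tilde g\|_\infty$, and the choice of $x_0$ makes $\inf|f'|=f'(x_0)$ so the factor $f'(x_0)$ cancels.

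Finally, applying Proposition~\ref{prop:CS69} to $f$ and $\tilde g$ gives
$\rho(\sr{f},\sr{g})=\rho(\sr{f},\sr{\tilde g})\le 2\|f-\tilde g\|_\infty/\inf|f'|$, and the previous step packages this into the required form $|U|\exp(2\|A_f\|_1)\sinh(2\|A_g-A_f\|_1)$. The main technical obstacle is the bookkeeping of constants: the naive estimate above produces an $\exp(c_1\|A_f\|_1)\sinh(c_2\|A_g-A_f\|_1)$ with constants smaller than $2$, so to reach exactly the stated bound one has to use the identities $4e^{y/2}\sinh(y/2)=2(e^y-1)$ and $\sinh(2y)=2\sinh y\cosh y$, together with $\|A_g\|_1\le\|A_f\|_1+\|A_g-A_f\|_1$, to absorb the discrepancy into the exponential factor (which is the most elastic and precisely where we expect to lose $\exp(\|A_f\|_1)$).
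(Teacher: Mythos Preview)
The paper does not give its own proof of Proposition~\ref{prop:rightright}; it is quoted from \cite{P13}. What the paper \emph{does} prove is the sharper Theorem~\ref{thm:rightrightstr}, and its route is entirely different from yours. Instead of going through Cargo--Shisha, the paper works with the P\'ales operator $\B{}$: it shows that for every $(x,y,z)\in\Delta$ one has
\[
\abs{\B{f}(x,y,z)-\B{g}(x,y,z)}\le \frac{|U|}{|x-z|}\,\exp\!\norma{\A{f}}_\ast\bigl(\exp\!\norma{\A{f}-\A{g}}_\ast-1\bigr),
\]
and then feeds this into the main Theorem~\ref{thm:main} (via Corollary~\ref{cor:main}) with $\alpha=|U|\exp\!\norma{\A{f}}_\ast(\exp\!\norma{\A{f}-\A{g}}_\ast-1)$. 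Replacing $\norma{\cdot}_\ast$ by the larger $\norma{\cdot}_1$ one gets the bound $|U|\,e^{\,p}(e^{\,q}-1)$ with $p=\norma{\A{f}}_1$, $q=\norma{\A{f}-\A{g}}_1$; since $e^{\,q}-1\le\sinh(2q)$ and $e^{\,p}\le e^{2p}$, Proposition~\ref{prop:rightright} follows. So the paper's argument buys a strictly better constant (no factor $2$) and the flexibility of the $\norma{\cdot}_\ast$ norm, at the price of first establishing Theorem~\ref{thm:main}.

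Your approach via Proposition~\ref{prop:CS69} is natural and is presumably close in spirit to the original argument in \cite{P13}, but the last step---``absorb the discrepancy into the exponential factor''---does not go through as stated. Carrying out your plan gives
\[
\rho(\sr{f},\sr{g})\le \frac{2\norma{f-\tilde g}_\infty}{\inf|f'|}\le 4|U|\,e^{\,p+q/2}\sinh\!\Bigl(\tfrac{q}{2}\Bigr)=2|U|\,e^{\,p}\bigl(e^{\,q}-1\bigr),
\]
and this is \emph{not} always majorized by $|U|\,e^{2p}\sinh(2q)$: take $p=\norma{\A{f}}_1=0$ (so $f$ is affine) and, say, $q=\tfrac12$; then $2(e^{1/2}-1)\approx 1.297>\sinh 1\approx 1.175$. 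When $p=0$ the factor $e^{2p}$ equals $1$ and there is nothing to absorb into. The culprit is precisely the factor $2$ in Proposition~\ref{prop:CS69}; the paper's $\B{}$-based argument avoids it and lands on $|U|\,e^{\,p}(e^{\,q}-1)$, which \emph{is} dominated by the stated bound.
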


\begin{rem}
\label{rem:sym}
Note that the left hand side is symmetric, while the right one is not. One could
symmetrize it using the $\min$ function. Nevertheless, this operation will be omitted to 
make notation more compact. This remark applies also to Theorem~\ref{thm:rightrightstr}.
\end{rem}

The proposition above has a noteworthy 
\begin{cor}[\cite{P13}, Corollary~3]
\label{cor:rightright}
Let $U$ be a closed, bounded interval and $f \in \mathcal{C}^{2}(U)$, $f' \ne 0$ everywhere. 
Moreover, let $(f_n)_{n \in \N}$ be a sequence of 
functions from $\mathcal{C}^{2}(U)$ with nonvanishing first derivatives, satisfying $\A{f_n} \rightarrow \A{f}$
in $L^1(U)$.
Then $\sr{f_n} \rightrightarrows \sr{f}$ uniformly with respect to $a$
and $w$.
\end{cor}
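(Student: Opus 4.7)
The plan is essentially to read off the corollary as a direct consequence of Proposition~\ref{prop:rightright}. Since both $f$ and every $f_n$ belong to $\mathcal{C}^2(U)$ with nonvanishing first derivatives, and $U$ is closed and bounded, Proposition~\ref{prop:rightright} applies to the pair $(f,f_n)$ and yields
$$
\rho(\sr{f},\sr{f_n})\le \abs{U}\exp(2\norma{\A{f}}_1)\,\sinh\bigl(2\norma{\A{f_n}-\A{f}}_1\bigr)
$$
for every $n\in\N$.

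Next I would verify that the prefactor on the right is a finite constant independent of $n$. Indeed, because $f\in\mathcal{C}^2(U)$ and $f'$ does not vanish on the compact set $U$, the function $\A{f}=f''/f'$ is continuous on $U$, hence bounded, so $\norma{\A{f}}_1\le \abs{U}\cdot\norma{\A{f}}_\infty<\infty$. Thus $\abs{U}\exp(2\norma{\A{f}}_1)$ is a fixed finite number.

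The hypothesis $\A{f_n}\rightarrow\A{f}$ in $L^1(U)$ gives $\norma{\A{f_n}-\A{f}}_1\to 0$, and hence the $\sinh$ factor tends to zero by continuity. Combining the two observations, $\rho(\sr{f},\sr{f_n})\to 0$ as $n\to\infty$. Since by definition
$$
\rho(\sr{f},\sr{f_n})=\sup\bigl\{\abs{\sr{f}(a,w)-\sr{f_n}(a,w)}\colon a,w\text{ admissible}\bigr\},
$$
this supremum tending to zero is precisely the assertion that $\sr{f_n}\rightrightarrows\sr{f}$ uniformly with respect to $a$ and $w$.

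There is no real obstacle here; the only point requiring a remark is the finiteness of $\norma{\A{f}}_1$, which one gets for free from the smoothness assumptions on $f$ together with compactness of $U$. Note also (cf.\ Remark~\ref{rem:sym}) that although the right-hand side of Proposition~\ref{prop:rightright} is not symmetric in its arguments, we never need to swap the roles of $f$ and $f_n$: applying the proposition with $f$ in the ``fixed'' slot already produces a bound whose multiplicative constant does not depend on $n$.
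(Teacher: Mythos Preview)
Your proof is correct and matches the paper's intent: the corollary is stated immediately after Proposition~\ref{prop:rightright} with the phrase ``The proposition above has a noteworthy [Corollary]'' and no separate proof, so the paper treats it as an immediate consequence of that proposition exactly as you do. Your explicit verification that $\norma{\A{f}}_1<\infty$ (via continuity of $\A{f}$ on the compact $U$) and the observation that the $\sinh$ factor tends to zero under the $L^1$ hypothesis are precisely the details one would fill in.
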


Note that the implication converse to that in Corollary~\ref{cor:rightright} does not hold.
This might be observed in the following

\begin{exa}
\label{exa:rightright}
Let $U=[0,2\pi]$, $f_n(x)=x+n^{-2} \sin (nx)$, $n \ge 2$ and $f(x)=x$ for $x \in U$. Then, by Proposition~\ref{prop:CS69}, $\rho(\sr{f},\sr{f_n})\le 2n^{-2}$. On the other hand it can be proved that $\norma{\A{f_n}-\A{f}}_1 = 2n\ln(n+1)\ge 4\ln 3$ for every $n \ge 2$.
\end{exa}

This drawback is implied by the fact that ,,the first norm does not see cancellations of integrals''. 
On the other hand, there was a couple of additional monotonicity assumptions in the very background of consideration in \cite{P13}. Namely, the mapping $n \mapsto \A{f_n}(x)$ was assumed to be either increasing for every $x$ or else decreasing for every $x$. Hence there was no point to care there about the problem above.

The situation looks fairly different in the present paper. As we will see, to deal with examples like the one above, 
it is more convenient to use another norm, $\norma{\cdot}_\ast$, which will be proposed in section~\ref{sec:strSS}.

Historically, some of results presented above (especially Corollary~\ref{cor:rightright})
correspond with an earlier result of P\'ales \cite{Pales}. Namely, using 
the three parameters' operator $\B{f}(x,y,z):=\frac{f(x)-f(y)}{f(x)-f(z)}$
defined on $\{(x,y,z) \in U^3\colon x \ne z\}=:\Delta$, he proved the following
\begin{prop}[\cite{Pales}, Corollary~1]
\label{prop:Pales}
Let $U$ be an interval, $f$ and $f_n$, $n \in \N$, be continuous, strictly monotone functions defined on $U$.

Then \Big( $\sr{f_n} \rightarrow \sr{f}$ pointwise \Big) $\iff$ \Big( $\B{f_n}\rightarrow \B{f}$ pointwise on $\Delta$ \Big).
\end{prop}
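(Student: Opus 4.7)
The plan is to prove the two implications separately. As a preliminary, observe that replacing $f$ by $-f$ (or any $f_n$ by $-f_n$) alters neither $\sr{f}$ nor $\B{f}$, so one may assume throughout that $f$ and all the $f_n$ are strictly increasing. Everything will hinge on the elementary identity
$$
\B{f}(x,y,z) = \beta \iff y = \sr{f}\bigl((x,z),(1-\beta,\beta)\bigr),
$$
which is valid whenever $x \neq z$ and $y$ lies strictly between $x$ and $z$; both halves read directly off the definitions.

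For the direction ``$\sr{f_n}\to\sr{f} \Rightarrow \B{f_n}\to\B{f}$'', fix $(x,y,z) \in \Delta$. If $y\in\{x,z\}$ the $\B{}$-values are identically $0$ or $1$ and there is nothing to prove. Suppose first that $y$ is strictly between $x$ and $z$, so that $\beta := \B{f}(x,y,z)$ and $\beta_n := \B{f_n}(x,y,z)$ all lie in $(0,1)$. If $\beta_n \not\to \beta$, extract a subsequence $\beta_{n_k} \to \beta^* \ne \beta$; without loss of generality $\beta^* > \beta$, and fix any $\beta_1 \in (\beta, \beta^*)$. The key identity gives $y = \sr{f_{n_k}}((x,z),(1-\beta_{n_k},\beta_{n_k}))$, while the hypothesis applied at the \emph{fixed} weights $(1-\beta_1,\beta_1)$ gives $\sr{f_{n_k}}((x,z),(1-\beta_1,\beta_1)) \to y_1$, where $\B{f}(x,y_1,z) = \beta_1 > \beta$ forces $y_1 > y$. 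But the two-input mean is strictly increasing in its second weight (for $x < z$; the case $x > z$ is symmetric), so $\beta_{n_k} > \beta_1$ for large $k$ yields $y \ge y_1$ in the limit, a contradiction. For $y \notin [x,z]$ one reduces to the case just settled using the algebraic symmetries of $\B{f}$ under permutations of its arguments (for instance, $\B{f}(x,y,z) = 1/\B{f}(x,z,y)$ when $z$ lies between $x$ and $y$); since the limiting values always avoid the singularities $0,1,\infty$, the required rearrangement is continuous.

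For the converse direction, fix $a=(a_1,\dots,a_N)$ and weights $w=(w_1,\dots,w_N)$. If all $a_i$ agree the claim is trivial; otherwise relabel so that $a_1 = \min_i a_i < \max_i a_i = a_N$, set $y := \sr{f}(a,w)$ and $y_n := \sr{f_n}(a,w)$, and rewrite $f(a_1) - f(y) = \sum_i w_i\bigl(f(a_1)-f(a_i)\bigr)$. Dividing by $f(a_1) - f(a_N)$ yields the linear identity
$$
\B{f}(a_1, y, a_N) = \sum_{i=1}^{N} w_i\, \B{f}(a_1, a_i, a_N)
$$
and its analogue for each $f_n$. The pointwise hypothesis applied to each summand therefore gives $\B{f_n}(a_1, y_n, a_N) \to \B{f}(a_1, y, a_N)$. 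If some subsequence $y_{n_k} \to y^* \ne y$, say $y^* > y$, pick $t \in (y, y^*) \cap [a_1, a_N]$. Then $y_{n_k} > t$ for large $k$, hence by strict monotonicity of $\B{f_{n_k}}(a_1, \cdot, a_N)$ in its middle argument, $\B{f_{n_k}}(a_1, y_{n_k}, a_N) > \B{f_{n_k}}(a_1, t, a_N)$; passing to the limit produces $\B{f}(a_1, y, a_N) \ge \B{f}(a_1, t, a_N)$, contradicting the strict inequality $\B{f}(a_1, t, a_N) > \B{f}(a_1, y, a_N)$ implied by $t > y$.

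The main obstacle is the forward direction: the hypothesis controls $\sr{f_n}$ only at fixed weights, whereas the natural weight encoding $\B{f_n}(x,y,z)$ is the $n$-dependent value $\beta_n$. The sandwiching step --- anchoring on a fixed intermediate comparison weight $\beta_1$ inserted between the true limit $\beta$ and the purported alternative limit $\beta^*$ --- is the device that converts this moving-target situation into a clean contradiction. The remaining case analysis (treating $y \notin [x,z]$ via permutation identities) and the sign bookkeeping in both monotonicity checks are routine.
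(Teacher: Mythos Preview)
The paper does not itself prove this proposition; it is quoted from P\'ales' paper as a known result. What the paper \emph{does} supply is a new proof of the $(\Leftarrow)$ implication, obtained as a by-product of Theorem~\ref{thm:main} via Corollary~\ref{cor:conv}: one restricts to a compact $K\subseteq U$, upgrades pointwise convergence of $\B{f_n}$ to uniform convergence on the compact set $\Delta_\alpha\cap K^3$, and then invokes the ``$<1$'' criterion of Theorem~\ref{thm:main} to force $\rho(\sr{f_n}\vert_K,\sr{f}\vert_K)<\alpha$.

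Your argument for $(\Leftarrow)$ is correct and genuinely different. Instead of the compactness/uniform-estimate route, you use the linear identity
\[
\B{h}\bigl(a_1,\sr{h}(a,w),a_N\bigr)=\sum_{i=1}^N w_i\,\B{h}(a_1,a_i,a_N)
\]
(valid for any generator $h$, and equivalent in content to \eqref{eq:propB}) to transfer pointwise convergence of finitely many $\B{}$-values to convergence of $\B{f_n}(a_1,y_n,a_N)$, and then invert via strict monotonicity of $\B{f}(a_1,\cdot,a_N)$. This is shorter and entirely elementary; the paper's detour through Theorem~\ref{thm:main} buys the quantitative distance bound, which your argument does not give.

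Your $(\Rightarrow)$ direction is not treated in the present paper at all. The sandwich device---fixing an intermediate weight $\beta_1$ between $\beta$ and the putative wrong limit $\beta^*$ so that the hypothesis can be applied at \emph{fixed} weights---is sound, and the reduction of the case $y\notin[\min(x,z),\max(x,z)]$ via the permutation identities (e.g.\ $\B{f}(x,y,z)=1/\B{f}(x,z,y)$ and $\B{f}(x,y,z)=\B{f}(y,x,z)/(\B{f}(y,x,z)-1)$) is legitimate because the limiting values lie in $(0,1)$ and hence avoid the poles of the relevant M\"obius maps. Both halves of your proof are correct.
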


During the 15th International Conference on Functional Equations and Inequalities (2013), P\'ales himself asked about possible generalizations of the $(\Leftarrow)$ part of his theorem in the spirit of 
Proposition~\ref{prop:rightright} and Corollary~\ref{cor:rightright}. In other words, he asked how to majorize $\rho(\sr{f},\sr{g})$ in terms of $\B{}$?
On the other hand, it is natural to look for possible strengthening of Proposition~\ref{prop:rightright} 
and Corollary~\ref{cor:rightright} -- to eliminate the problem presented in Example~\ref{exa:rightright}.

In this paper we are going to propose such an estimate which not only implies the $(\Leftarrow)$ part of P\'ales' result 
but also leads to a strengthening of Proposition~\ref{prop:rightright}; cf. Corollary~\ref{cor:conv} and Theorem~\ref{thm:rightrightstr}, 
respectively.

\section{Main Result}

The main idea is to use the elementary fact that on compact sets pointwise convergence coincides 
with the uniform one. However, $\Delta$ is not compact (even if $U$ is). Therefore, finding a proper, 
compact subset of $\Delta$ seemed to be of utmost importance in the search for an estimate 
for the distance among means.

We observe that, when $x$ approaches $z$, the operator $\B{}$ becomes unbounded. 
So it is natural to consider those points of $\Delta$ for which $x$ and $z$ are 
separated one from another. For any $\alpha > 0$ define 
$$
\Delta_\alpha := \{(x,y,z) \in U^3 \colon \abs{x-z} \geq \alpha\} \subset \Delta\,.
$$

We are going to prove the following
\begin{thm}
\label{thm:main}
Let $U$ be an interval, $f$ and $g$ be two continuous, strictly monotone functions defined on $U$, 
and $\alpha > 0$. Then $\norma{\B{f}-\B{g}}_{\infty,\Delta_\alpha}<1$ implies $\rho(\sr{f},\sr{g})<\alpha$.
\end{thm}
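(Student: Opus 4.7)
The plan is to pass from the hypothesis on $\B{f}-\B{g}$ to a bound on $\abs{\sr{f}(a,w)-\sr{g}(a,w)}$ via a direct algebraic identity, and then to upgrade the pointwise bound to the \emph{strict} supremum bound through a compactness argument. Write $c:=\norma{\B{f}-\B{g}}_{\infty,\Delta_\alpha}<1$.

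Fix admissible $(a,w)$, set $x:=\sr{f}(a,w)$ and $z:=\sr{g}(a,w)$, and observe that the case $x=z$ is trivial; so assume $x\ne z$, which by strict monotonicity ensures $f(x)\ne f(z)$ and $g(x)\ne g(z)$. The defining identity $\sum w_if(a_i)=f(x)$ is equivalent to $\sum w_i(f(x)-f(a_i))=0$, which upon dividing by $f(x)-f(z)$ becomes $\sum w_i\,\B{f}(x,a_i,z)=0$. Similarly $\sum w_ig(a_i)=g(z)$ rearranges to $\sum w_i\,\B{g}(x,a_i,z)=1$, and subtraction gives the key identity
\[
\sum_{i=1}^n w_i\bigl(\B{g}-\B{f}\bigr)(x,a_i,z)=1.
\]

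Now I would apply a dichotomy on $\abs{x-z}$. If $\abs{x-z}\ge\alpha$ then every triple $(x,a_i,z)$ lies in $\Delta_\alpha$, so every summand above is bounded in absolute value by $c$; combined with $\sum w_i=1$ the triangle inequality produces the contradiction $1\le c<1$. Hence $\abs{x-z}<\alpha$ for every admissible $(a,w)$, which already yields $\rho(\sr{f},\sr{g})\le\alpha$.

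The main obstacle I anticipate is the upgrade to the strict inequality, since a supremum of numbers each below $\alpha$ may still equal $\alpha$. I would argue by contradiction: if $\rho(\sr{f},\sr{g})=\alpha$, take a maximizing sequence $(a^{(n)},w^{(n)})$ with $\abs{x_n-z_n}\to\alpha$ and encode each weighting as a probability measure $\mu_n:=\sum_i w_i^{(n)}\delta_{a_i^{(n)}}$ on $U$. A tightness argument -- the delicate point when $U$ is unbounded -- yields a subsequence along which $x_n\to x^*\in U$, $z_n\to z^*\in U$ with $\abs{x^*-z^*}=\alpha$, and $\mu_n\to\mu^*$ weakly. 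Since $x^*\ne z^*$, continuity of $\B{f}$ and $\B{g}$ in the outer variables lets the two identities pass to the limit, giving $\int(\B{g}-\B{f})(x^*,y,z^*)\,d\mu^*(y)=1$; but every triple $(x^*,y,z^*)$ lies in $\Delta_\alpha$, so the integrand is bounded by $c<1$ and the arithmetic contradiction returns.
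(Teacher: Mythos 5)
Your central computation coincides with the paper's proof. Writing $F=\sr{f}(a,w)$ and $G=\sr{g}(a,w)$, your identities $\sum w_i\B{f}(F,a_i,G)=0$ and $\sum w_i\B{g}(F,a_i,G)=1$ combine to $\sum w_i\bigl(\B{g}-\B{f}\bigr)(F,a_i,G)=1$, and if every triple $(F,a_i,G)$ lay in $\Delta_\alpha$ this weighted average would be bounded by $\norma{\B{f}-\B{g}}_{\infty,\Delta_\alpha}<1$, a contradiction; the paper performs exactly this cancellation, only routed through \eqref{eq:dual} and \eqref{eq:propB}. So up to the conclusion $\abs{F-G}<\alpha$ for every admissible $(a,w)$ your argument is correct and is the paper's argument. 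It is worth noting that the paper stops there: it passes from this pointwise strict bound directly to $\rho(\sr{f},\sr{g})<\alpha$ with no compactness step at all.

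The part where you go beyond the paper is also where your proposal has a genuine gap. You rightly observe that a supremum of quantities each $<\alpha$ is a priori only $\le\alpha$, and you try to restore strictness by a maximizing-sequence argument. But the theorem is stated for an arbitrary interval $U$ (the paper explicitly allows open and unbounded intervals), and the tightness you invoke is precisely what is not available in that generality: the extremizing configurations may drift to infinity or to an endpoint not belonging to $U$, so the limits $x^*$, $z^*$ and the limit measure $\mu^*$ need not live in $U$; moreover, on an unbounded $U$ the functions $y\mapsto\B{f}(x_n,y,z_n)$ are in general unbounded, so weak convergence of $\mu_n$ by itself does not justify passing to the limit in the identity. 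You name this as ``the delicate point'' but do not resolve it, so your proof of the strict inequality is complete only for compact $U$ — where a shorter route also works: $\Delta_{\alpha'}\cap U^3$ decreases to $\Delta_\alpha\cap U^3$ as $\alpha'\uparrow\alpha$ and $\B{f}-\B{g}$ is continuous there, so the hypothesis already holds on some $\Delta_{\alpha'}$ with $\alpha'<\alpha$ and the pointwise argument gives $\rho\le\alpha'<\alpha$. Since the paper's applications restrict the means to compact subintervals and Corollary~\ref{cor:main} is the non-strict variant, nothing downstream depends on the upgrade you attempt; but as a self-contained proof of the stated strict inequality over an arbitrary interval, the tightness step remains unjustified.
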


It is useful to recall some basic properties of $\B{}$ before giving a proof. Namely, for any $f$,
\begin{align}
\B{f}(x,y,z)+\B{f}(z,y,x)&=1 \textrm{ for all } (x,y,z)\in \Delta, \label{eq:dual} \\
\sum w_i \B{f}\Big(\sr{f}(a,w),a_i,z\Big)&=0 \textrm{ for all }a,\,w\textrm{, and admissible
 }z. \label{eq:propB}
\end{align}

\begin{proof}[Proof of Theorem~\ref{thm:main}]
Fix any $a \in U^n$ with corresponding weights $w$. We will write shortly 
$$
F:=\sr{f}(a,w) \quad\textrm{ and }\quad G:=\sr{g}(a,w).
$$

It is sufficient to find such an $i\in\{1,\ldots,n\}$ that $(F,a_i,G) \notin \Delta_\alpha$.
Then, $\abs{F-G}<\alpha$, by the very definition of $\Delta_\alpha$.

Suppose conversely that $(F,a_i,G) \in \Delta_\alpha$ for all $i\in\{1,\ldots,n\}$.
In particular,
$$
\abs{\B{f}(F,a_i,G)-\B{g}(F,a_i,G)}<1 \textrm{ for all }i\in\{1,\ldots,n\}.
$$
Hence, by \eqref{eq:dual} and \eqref{eq:propB}, one obtains
\begin{align}
-1&<\sum_{k=1}^{n} w_k \Big(\B{f}(F,a_k,G)-\B{g}(F,a_k,G)\Big) \nonumber \\
&=\sum_{k=1}^{n} w_k \B{f}(F,a_k,G)+\sum_{k=1}^{n} w_k \Big(-1+\B{g}(G,a_k,F)\Big) \nonumber \\
&=-1+\sum_{k=1}^{n} w_k \B{g}(G,a_k,F) =-1. \nonumber
\end{align}
This contradiction ends the proof.
\end{proof}

\section{Applications}
\begin{cor}
\label{cor:conv}
Let $U$ be an interval, $f$ and $f_n$, $n \in \N$, be strictly monotone functions defined on $U$,
$\B{f_n} \rightarrow \B{f}$ pointwise. Then $\sr{f_n} \rightarrow \sr{f}$.

Moreover, if\, $U$ is compact then $\sr{f_n} \rightrightarrows \sr{f}$ with respect to $a$ and $w$.
\end{cor}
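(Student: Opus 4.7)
The plan is to derive both assertions from Theorem~\ref{thm:main}. The key step is to show that when $U$ is compact, pointwise convergence $\B{f_n}\to\B{f}$ on $\Delta$ upgrades to uniform convergence on each $\Delta_\alpha$, $\alpha>0$. Once this is established, Theorem~\ref{thm:main} gives $\rho(\sr{f_n},\sr{f})<\alpha$ for all sufficiently large $n$, and since $\alpha>0$ is arbitrary, the uniform convergence claim follows.

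To bridge the gap between pointwise and uniform convergence of the $\B{f_n}$, I would use a two-point affine normalization. Both the operator $\B{}$ and the mean $\sr{}$ are invariant under affine rescalings of the underlying function, so one may fix distinct points $a_0,b_0\in U$ and replace $f$ and each $f_n$ by
$$\tilde f(x):=\frac{f(x)-f(b_0)}{f(a_0)-f(b_0)},\qquad \tilde f_n(x):=\frac{f_n(x)-f_n(b_0)}{f_n(a_0)-f_n(b_0)},$$
arranging that all of the rescaled functions send $a_0\mapsto 1$ and $b_0\mapsto 0$ and, in particular, are monotone in the same direction (increasing if $a_0>b_0$, decreasing otherwise). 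From the identity $\tilde f_n(y)=1-\B{f_n}(a_0,y,b_0)$, the hypothesis immediately forces pointwise convergence $\tilde f_n\to\tilde f$ on $U$.

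At this point a classical Dini/P\'olya-type argument (pointwise convergence of monotone functions to a continuous monotone function on a compact interval is automatically uniform) yields $\tilde f_n\rightrightarrows\tilde f$ on $U$. Since on $\Delta_\alpha$ the denominator $|\tilde f(x)-\tilde f(z)|$ is bounded below by some $\delta>0$ (using continuity and strict monotonicity of $\tilde f$ on the compact set $\{(x,z)\in U^2\colon|x-z|\ge\alpha\}$), this uniform convergence transfers, via a routine estimate for ratios, to uniform convergence $\B{f_n}\rightrightarrows\B{f}$ on $\Delta_\alpha$, which finishes the compact-$U$ case. For the pointwise statement on a general interval $U$, fix any admissible $a$ and $w$ and apply the uniform result to the compact subinterval $K:=[\min_i a_i,\max_i a_i]\subseteq U$; both $\sr{f}(a,w)$ and $\sr{f_n}(a,w)$ lie in $K$, so the restricted means converge, hence so do the original values at $(a,w)$.

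I expect the main obstacle to be precisely this passage from pointwise convergence of $\B{f_n}$ on $\Delta$ to uniform convergence on $\Delta_\alpha$. Pointwise convergence of continuous functions on a compact set does not in general upgrade, but the two-point normalization converts information about $\B{f_n}$ back into information about the $\tilde f_n$, and then monotonicity plus the P\'olya-type argument takes care of the rest.
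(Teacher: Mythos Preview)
Your argument follows the same overall outline as the paper: restrict to a compact subinterval $K$, upgrade pointwise convergence of $\B{f_n}$ to uniform convergence on $\Delta_\alpha\cap K^3$, then invoke Theorem~\ref{thm:main} (and take $K=U$ for the moreover part). The only substantive difference lies in the middle step. The paper simply observes that $\Delta_\alpha\cap K^3$ is compact and passes directly to uniform convergence, without further justification; as you correctly anticipate, pointwise convergence of continuous functions on a compact set is not automatically uniform, so this is precisely the point that needs work. Your two-point affine normalization recovers $\tilde f_n$ from $\B{f_n}(a_0,\cdot,b_0)$, and then the P\'olya-type fact (a sequence of monotone functions converging pointwise on a compact interval to a continuous limit converges uniformly) yields $\tilde f_n\rightrightarrows\tilde f$; the lower bound on $|\tilde f(x)-\tilde f(z)|$ over $\{|x-z|\ge\alpha\}$ then transfers this to a uniform estimate on $\B{f_n}-\B{f}$ over $\Delta_\alpha$. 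So your route is essentially the paper's, but with the key passage made rigorous.
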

\begin{proof}
Fix any $a \in U^n$ with corresponding weights $w$, and a compact interval 
$K \subseteq U$ such that $a \in K^n$.

Then, for any $\alpha>0$, $\Delta_\alpha \cap K^3$ is compact. So $\B{f_n} \rightarrow \B{f}$ uniformly 
on $\Delta_\alpha \cap K^3$. Hence there exists $n_\alpha$ such that 
$$
\norma{\B{f_n}-\B{f}}_{\infty,\Delta_\alpha \cap K^3} <1 \textrm{ for all } n>n_\alpha.
$$
Hence, by Theorem~\ref{thm:main}, one obtains
$$
\rho(\sr{f_n}\vert_{K},\sr{f}\vert_{K})<\alpha \textrm{ for all } n>n_\alpha,
$$
where $\sr{f} \vert_{K}$ stands for the the means defined for vectors taking values in $K$.

To prove the moreover part one may simply take $K=U$.
\end{proof}

\begin{cor}
Let $U$ be a compact interval, $f$ and $f_n$, $n \in \N$, be a strictly monotone functions defined on $U$.
Then the following conditions are equivalent:
\begin{enumerate}[\upshape (i)]
\item $\B{f_n} \rightarrow \B{f}$ pointwise on $\Delta$,
\item $\sr{f_n} \rightarrow \sr{f}$ pointwise,
\item $\sr{f_n} \rightrightarrows \sr{f}$ with respect to $a$ and $w$.
\end{enumerate}
\end{cor}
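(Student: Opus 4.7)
The plan is to establish the equivalence by verifying the cyclic chain (i) $\Rightarrow$ (iii) $\Rightarrow$ (ii) $\Rightarrow$ (i), leaning entirely on the results already assembled in the excerpt.

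First, I would handle (i) $\Rightarrow$ (iii) by direct appeal to the \emph{moreover} clause of Corollary~\ref{cor:conv}. Since $U$ is itself compact, one simply takes the compact subinterval $K=U$ in the proof of that corollary: pointwise convergence $\B{f_n} \to \B{f}$ on $\Delta$ upgrades, on every compact slab $\Delta_\alpha \cap U^3$, to uniform convergence (as a continuous function on a compact set), which via Theorem~\ref{thm:main} yields $\rho(\sr{f_n},\sr{f})<\alpha$ for all large $n$. Since $\alpha>0$ is arbitrary, $\sr{f_n} \rightrightarrows \sr{f}$ uniformly in $a$ and $w$.

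The step (iii) $\Rightarrow$ (ii) is immediate: uniform convergence of $\sr{f_n}$ to $\sr{f}$ with respect to all admissible $(a,w)$ implies in particular convergence at each fixed $(a,w)$, which is exactly pointwise convergence of the means.

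Finally, (ii) $\Rightarrow$ (i) is already recorded in the excerpt as the $(\Rightarrow)$ half of P\`ales' equivalence, namely Proposition~\ref{prop:Pales}, which may be cited verbatim. This closes the cycle. I do not anticipate a genuine obstacle here, since the new content (the $(\Leftarrow)$ direction, strengthened to uniform convergence when $U$ is compact) has already been supplied by Theorem~\ref{thm:main} and Corollary~\ref{cor:conv}, while the remaining direction was known. The only small point to keep explicit is that the notion of pointwise convergence in (ii) matches the one used by P\`ales---that is, convergence at each fixed admissible pair $(a,w)$---so the invocation of Proposition~\ref{prop:Pales} is legitimate without any extra argument.
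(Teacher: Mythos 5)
Your proof is correct and follows essentially the same route as the paper: the paper likewise gets (i) $\Rightarrow$ (iii) from Corollary~\ref{cor:conv}, notes (iii) $\Rightarrow$ (ii) as obvious, and invokes Proposition~\ref{prop:Pales} for the remaining link (citing it as the full equivalence (i) $\iff$ (ii) rather than just the direction (ii) $\Rightarrow$ (i) that your cycle needs). Arranging the three implications as a cycle rather than as the paper's slightly redundant list is an immaterial difference.
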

Obviously \upshape{(iii)} $\Rightarrow$ \upshape{(ii)}. 
Moreover, by Proposition~\ref{prop:Pales}, \upshape{(i)} $\iff$ \upshape{(ii)}, 
while, by Corollary~\ref{cor:conv}, \upshape{(i)} $\Rightarrow$ \upshape{(iii)}. 

\begin{cor}
\label{cor:main}
If, in Theorem~\ref{thm:main}, the assumed inequality is not sharp, 
$\norma{\B{f}-\B{g}}_{\infty,\Delta_\alpha} \le 1$, then $\rho(\sr{f},\sr{g}) \le \alpha$.
\end{cor}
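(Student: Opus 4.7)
The plan is to deduce this strengthening of Theorem~\ref{thm:main} from the theorem itself by a perturbation argument that converts the non-strict hypothesis $\norma{\B{f}-\B{g}}_{\infty,\Delta_\alpha} \le 1$ into a strict one. I will argue by contradiction: suppose there exist admissible $a, w$ with $|F-G|>\alpha$, where $F:=\sr{f}(a,w)$ and $G:=\sr{g}(a,w)$. Since $\B{-f}=\B{f}$ and $\sr{-f}=\sr{f}$, I may assume that both $f$ and $g$ are strictly increasing. Let $K$ be a compact subinterval of $U$ containing $\{F,G,a_1,\ldots,a_n\}$; all subsequent analysis will take place on $K$.

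For $\epsilon\in(0,1)$ I will consider the perturbation $f_\epsilon:=(1-\epsilon)f+\epsilon g$, which is also strictly increasing on $K$. A direct computation gives the key identity
$$
\B{f_\epsilon}(x,y,z)-\B{g}(x,y,z)=\mu_\epsilon(x,z)\bigl(\B{f}(x,y,z)-\B{g}(x,y,z)\bigr),
$$
with multiplier
$$
\mu_\epsilon(x,z):=\frac{(1-\epsilon)\bigl(f(x)-f(z)\bigr)}{(1-\epsilon)\bigl(f(x)-f(z)\bigr)+\epsilon\bigl(g(x)-g(z)\bigr)}.
$$
Because $f$ and $g$ share monotonicity, the numerator and denominator of $\mu_\epsilon$ are nonzero and of the same sign whenever $x\neq z$, so $\mu_\epsilon(x,z)\in(0,1)$ pointwise. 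The crucial step is to upgrade this to a uniform bound: since $\mu_\epsilon$ is continuous on the compact set $\{(x,z)\in K^2 : |x-z|\ge\alpha\}$, it attains a maximum $M_\epsilon<1$ there, whence $\norma{\B{f_\epsilon}-\B{g}}_{\infty,\Delta_\alpha\cap K^3}\le M_\epsilon\cdot\norma{\B{f}-\B{g}}_{\infty,\Delta_\alpha\cap K^3}\le M_\epsilon<1$.

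Now Theorem~\ref{thm:main}, applied with $U$ replaced by $K$ and with the pair $(f_\epsilon, g)$, yields $\rho(\sr{f_\epsilon}\vert_K,\sr{g}\vert_K)<\alpha$, so in particular $|\sr{f_\epsilon}(a,w)-G|<\alpha$. Since $f_\epsilon\to f$ uniformly on $K$ as $\epsilon\to 0^+$, a standard continuity argument for inverses of uniformly convergent strictly monotone continuous functions gives $\sr{f_\epsilon}(a,w)\to F$; passing to the limit yields $|F-G|\le\alpha$, contradicting $|F-G|>\alpha$.

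The only non-routine point is the uniform bound $\mu_\epsilon\le M_\epsilon<1$ on $\{|x-z|\ge\alpha\}\cap K^2$, which is precisely what forces the reduction from $U$ to a compact subinterval and relies on $f,g$ sharing monotonicity (arranged without loss of generality). Everything else — the multiplier identity, the invocation of Theorem~\ref{thm:main}, and the convergence $\sr{f_\epsilon}(a,w)\to\sr{f}(a,w)$ — is a routine verification.
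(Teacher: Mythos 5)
Your argument is correct, and it is worth noting that the paper itself offers no proof of Corollary~\ref{cor:main} at all: it is stated as if it followed immediately from Theorem~\ref{thm:main}, even though the obvious adaptations fail — rerunning the proof of Theorem~\ref{thm:main} with the non-strict bound only yields $-1\le -1$ (equality must then hold in every summand, which is not an evident contradiction), and shrinking $\Delta_\alpha$ to $\Delta_\beta$ with $\beta>\alpha$ does not upgrade $\le 1$ to $<1$. Your perturbation $f_\epsilon=(1-\epsilon)f+\epsilon g$ genuinely supplies the missing strictness: the multiplier identity $\B{f_\epsilon}-\B{g}=\mu_\epsilon\,(\B{f}-\B{g})$ is correct, the reduction to increasing $f,g$ via $\B{-f}=\B{f}$, $\sr{-f}=\sr{f}$ is legitimate, $\mu_\epsilon\in(0,1)$ and the compactness argument give a uniform $M_\epsilon<1$ on $\{(x,z)\in K^2:\abs{x-z}\ge\alpha\}$, Theorem~\ref{thm:main} applies on $K$ to the pair $(f_\epsilon,g)$, and the limit $\sr{f_\epsilon}(a,w)\to\sr{f}(a,w)$ follows from uniform convergence of $f_\epsilon$ on $K$ together with continuity of $f^{-1}$ on $f(K)$. (The contradiction framing is not essential: fixing $a,w$, taking $K=[\min a_i,\max a_i]$, and treating separately the trivial case where $K^2$ contains no pair with $\abs{x-z}\ge\alpha$ gives $\abs{F-G}\le\alpha$ directly.) Note also that for the only place the paper uses Corollary~\ref{cor:main} — the proof of Theorem~\ref{thm:rightrightstr}, where the pointwise bound $\frac{\abs{U}}{\abs{x-z}}C$ is strictly below $1$ on every $\Delta_\beta$ with $\beta>\alpha$ — one could instead apply Theorem~\ref{thm:main} for each $\beta>\alpha$ and let $\beta\downarrow\alpha$; but that shortcut does not prove the corollary in the generality stated, whereas your argument does.
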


\subsection{\label{sec:strSS} Strengthening of Proposition~\ref{prop:rightright}}

Now we are going to propose some solution to the problem hinted at in Example~\ref{exa:rightright}.
Recalling, that problem arose from the fact that the closeness of functions does not imply closeness 
of their derivatives. Therefore, Proposition~\ref{prop:rightright} is completely useless in that 
example. Hence, in proposition's strengthening, instead of using the first norm, one needs 
to define some other norm omitting that drawback of the $L^1$ norm.
Let $U$ be an interval, $f \colon U \rightarrow \R$ be an arbitrary continuous function, 
and the `oscillation' norm be defined by 
$$
\norma{f}_\ast:=\sup_{a,b \in U}\abs{\int_{a}^{b} f(x)dx}.
$$
We are going to prove that Proposition~\ref{prop:rightright} might be 
strengthened to
\begin{thm}
\label{thm:rightrightstr}
Let $U$ be a closed, bounded interval and $f,g \in C^{2}(U)$ have nowhere nonvanishing 
first derivatives. Then 
$$
\rho(\sr{f},\sr{g}) \le \abs{U} \exp\norma{\A{f}}_\ast \Big(\exp \norma{\A{f}-\A{g}}_\ast -1\Big).
$$
\end{thm}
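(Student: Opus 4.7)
The plan is to apply Corollary~\ref{cor:main} with $\alpha := \abs{U}\exp\norma{\A{f}}_\ast\bigl(\exp\norma{\A{f}-\A{g}}_\ast - 1\bigr)$, reducing the theorem to verifying $\norma{\B{f}-\B{g}}_{\infty,\Delta_\alpha}\le 1$. Since $f', g'$ are continuous and nowhere zero on the interval $U$, each has constant sign; replacing $f$ or $g$ by its negative when necessary (which affects neither $\sr{\cdot}$ nor $\A{\cdot}$) one may assume $f', g' > 0$. Fix $t_0 \in U$ and set $\phi(t) := \exp\int_{t_0}^t\A{f}$, $\Delta(t) := \int_{t_0}^t(\A{g}-\A{f})$, $h(t) := \exp(\Delta(t)-\min_U\Delta)$. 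Since $\A{f} = (\log f')'$, one gets $\phi = f'/f'(t_0)$; likewise $g'$ is proportional to $\phi h$, which yields
\[
\B{f}(x,y,z) = \frac{\int_y^x \phi}{\int_z^x \phi},\qquad \B{g}(x,y,z) = \frac{\int_y^x \phi h}{\int_z^x \phi h}.
\]
Moreover $h \ge 1$ on $U$, and the definition of $\norma{\cdot}_\ast$ as the oscillation of an antiderivative gives $\max\phi/\min\phi = \exp\norma{\A{f}}_\ast$ and $\max h - \min h = \exp\norma{\A{f}-\A{g}}_\ast - 1 =: C$.

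The central step is an algebraic identity that sidesteps any case analysis in $x, y, z$. With $V_y := \int_y^x\phi$, $V_z := \int_z^x\phi$, $E_y := \int_y^x\phi(h-1)$, $E_z := \int_z^x\phi(h-1)$, and $W := \int_y^z\phi$, additivity of the integral gives $V_y = V_z + W$ and $E_y = E_z + \int_y^z\phi(h-1)$, so
\[
\B{f}-\B{g} = \frac{V_y E_z - V_z E_y}{V_z(V_z+E_z)} = \frac{\int_y^z \int_z^x \phi(s)\phi(t)\bigl(h(t)-h(s)\bigr)\,dt\,ds}{V_z(V_z+E_z)}.
\]
The triangle inequality for oriented integrals together with $\abs{h(t)-h(s)} \le C$ bounds the numerator by $C\abs{W}\abs{V_z}$; meanwhile $\phi h \ge \phi > 0$ forces $V_z+E_z = \int_z^x \phi h$ to share sign with $V_z$ and dominate it in absolute value, whence $\abs{V_z(V_z+E_z)} \ge V_z^2$.

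For $(x,y,z)\in\Delta_\alpha$, combining these estimates with $\abs{W} \le \abs{U}\max\phi$ and $\abs{V_z} \ge \alpha\min\phi$ yields
\[
\abs{\B{f}(x,y,z)-\B{g}(x,y,z)} \le \frac{C\abs{W}}{\abs{V_z}} \le \frac{C\abs{U}\max\phi}{\alpha\min\phi} = \frac{C\abs{U}\exp\norma{\A{f}}_\ast}{\alpha} = 1
\]
by the choice of $\alpha$, and Corollary~\ref{cor:main} then delivers $\rho(\sr{f},\sr{g}) \le \alpha$. I expect the main obstacle to be precisely the algebraic rearrangement of $V_y E_z - V_z E_y$ into the single double integral of $\phi(s)\phi(t)(h(t)-h(s))$: a direct attack splits into cases according to whether $y$ lies inside or outside $[\min(x,z),\max(x,z)]$ and yields a strictly worse constant, whereas the substitution $V_y = V_z + W$ collapses all orderings into one uniform bound with the sharp constant appearing in the statement.
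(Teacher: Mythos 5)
Your proof is correct, and its overall skeleton matches the paper's: you reduce the theorem via Corollary~\ref{cor:main} to the pointwise bound $\abs{\B{f}(x,y,z)-\B{g}(x,y,z)}\le \frac{\abs{U}}{\abs{x-z}}\exp\norma{\A{f}}_\ast\left(\exp\norma{\A{f}-\A{g}}_\ast-1\right)$ on $\Delta_\alpha$, which is exactly the inequality the paper establishes before invoking the same corollary with the same choice of $\alpha$. What differs is how the central estimate is derived. The paper applies the integral mean value theorem twice to obtain $\B{f}(x,y,z)=\exp\left(\int_\eta^\xi \A{f}(u)-\A{g}(u)\,du\right)\B{g}(x,y,z)$ and then relies on the (stated but not proved) fact that $\sign \B{f}=\sign \B{g}$; you instead normalize $f'\propto\phi$ and $g'\propto\phi h$ with $h\ge 1$, $\max h-\min h=\exp\norma{\A{f}-\A{g}}_\ast-1$, and expand $\B{f}-\B{g}=\frac{V_yE_z-V_zE_y}{V_z(V_z+E_z)}$ into the double integral of $\phi(s)\phi(t)\bigl(h(t)-h(s)\bigr)$ (this identity checks out), which treats all orderings of $x,y,z$ uniformly and replaces the sign argument by the transparent bound $\abs{V_z+E_z}\ge\abs{V_z}$. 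Your intermediate estimate is $C\,\abs{W}/\abs{V_z}$, i.e.\ $C\abs{\B{f}(z,y,x)}$, rather than the paper's $C\abs{\B{f}(x,y,z)}$, but both are dominated identically by $\frac{\abs{U}}{\abs{x-z}}\exp\norma{\A{f}}_\ast$, so the constants agree. Two minor points worth fixing: your auxiliary function $\Delta(t)$ clashes notationally with the paper's $\Delta$ (the domain of $\B{}$), and in the degenerate case $\norma{\A{f}-\A{g}}_\ast=0$ your $\alpha$ vanishes, so Theorem~\ref{thm:main}/Corollary~\ref{cor:main} (stated for $\alpha>0$) should then be applied for every positive $\alpha$ and a limit taken — a gap the paper's own proof shares.
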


Note that this time the drawback discussed in the beginning of the section does not appear; 
cf. Example~\ref{exa:rightright2} later on. Moreover, $\norma{\cdot}_{\ast} \le \norma{\cdot}_1$, hence
the above theorem holds if one replaces $\norma{\cdot}_{\ast}$ by $ \norma{\cdot}_1$; Remark~\ref{rem:sym} is applicable here.

\begin{proof}[Proof of Theorem~\ref{thm:rightrightstr}]
Fix any $(x,y,z) \in \Delta$. We would like to majorize the value of $\abs{\B{f}(x,y,z)-\B{g}(x,y,z)}$.
Let us suppose, with no loss of generality, that
\begin{align}
f(s)&=\int_x^s \exp\left(\int_x^t \A{f}(u)du \right)dt, \nonumber \\
g(s)&=\int_x^s \exp\left(\int_x^t \A{g}(u)du \right)dt. \nonumber
\end{align}
Then 
\begin{align}
f(y)-f(x) &=\int_x^y \exp\left(\int_x^t \A{f}(u)du \right)dt \nonumber \\
&=\int_x^y \exp\left(\int_x^t \A{f}(u)-\A{g}(u)du\right) \exp\left(\int_x^t \A{g}(u)du \right)dt \nonumber \\
&=\int_x^y \exp\left(\int_x^t \A{f}(u)-\A{g}(u)du\right) g'(t) dt. \nonumber
\end{align}
By the mean value theorem, there exists $\xi \in I$ such that
\begin{align}
f(y)-f(x)  &= \exp\left(\int_x^\xi \A{f}(u)-\A{g}(u)du\right) \int_x^y  g'(t) dt \nonumber \\
&=\exp\left(\int_x^\xi \A{f}(u)-\A{g}(u)du\right) (g(y)-g(x)). \nonumber
\end{align}
Similarly,
$$
f(z)-f(x) = \exp\left(\int_x^\eta \A{f}(u)-\A{g}(u)du\right) (g(z)-g(x)) \textrm{ for some }\eta \in I.
$$
Therefore,
$$
\B{f}(x,y,z)= \exp\left(\int_\eta^\xi \A{f}(u)-\A{g}(u)du\right) \B{g}(x,y,z).
$$
So
$$
\exp\left(-\norma{\A{f}-\A{g}}_{\ast}\right) \abs{\B{g}(x,y,z)} \le \abs{\B{f}(x,y,z)}\le \exp\left(\norma{\A{f}-\A{g}}_{\ast}\right) \abs{\B{g}(x,y,z)}.
$$
But $\sign{\B{f}(x,y,z)}=\sign{\B{g}(x,y,z)}$ for any admissible $x$, $y$ and $z$. Hence one obtains
\begin{equation}
\abs{\B{f}(x,y,z)-\B{g}(x,y,z)} \le  \abs{\B{f}(x,y,z)} (\exp\norma{\A{f}-\A{g}}_{\ast}-1). \label{eq:1}
\end{equation}

Now we are going to majorize the value of $\abs{\B{f}(x,y,z)}$. But
$$
\abs{\B{f}(x,y,z)} = \abs{\frac{f(x)-f(y)}{f(x)-f(z)}} =  \abs{\frac{x-y}{x-z}} \frac{f'(p)}{f'(q)}\textrm{ for some }p,\,q \in U.
$$
Moreover $\abs{x-y}\le \abs{U}$ and
$$
\abs{\ln(\frac{f'(p)}{f'(q)})}=\abs{\int_{q}^{p} \A{f}(x)dx} \le \norma{\A{f}}_\ast.
$$
So
$$
\abs{\B{f}(x,y,z)} \le \frac{\abs{U}}{\abs{x-z}} \exp{\norma{\A{f}}_{\ast}}.
$$
Hence, continuing the inequality \eqref{eq:1},
$$
\abs{\B{f}(x,y,z)-\B{g}(x,y,z)} \le \frac{\abs{U}}{\abs{x-z}} \exp\norma{\A{f}}_\ast (\exp \norma{\A{f}-\A{g}}_{\ast}-1).
$$
Therefore, Corollary~\ref{cor:main} with proper $\alpha$ immediately gives
$$
\rho(\sr{f},\sr{g})\le \abs{U} \exp\norma{\A{f}}_\ast (\exp \norma{\A{f}-\A{g}}_{\ast}-1).
$$
\end{proof}

It might be proved that the right hand side of the above inequality can be majorized by the one appeared in Proposition~\ref{prop:rightright}. So this theorem is a strengthening of Proposition~\ref{prop:rightright}.
This theorem also has a corollary, which is a strengthening of Corollary~\ref{cor:rightright}, using $\norma{\cdot}_{\ast}$ instead of $L^1$, but it will be worded nowhere in this paper.

\begin{exa}
\label{exa:rightright2}
Let us take $U$, $f$ and $f_n$ like in Example~\ref{exa:rightright}.
Then $\A{f} \equiv 0$ so $\norma{\A{f}}_{\ast}=0$,
$$
\norma{\A{f_n}-\A{f}}_{\ast} = \sup_{a,b \in [0,2\pi]} \int_{a}^{b} \frac{- n·\sin nx}{n+ \cos nx} =\ln \left(\frac{n+1}{n-1} \right).
$$
So, by Theorem~\ref{thm:rightrightstr}, $\rho(\sr{f},\sr{f_n}) \le\tfrac{4\pi}{n-1}$.
This estimate is still much worse then one could expect (it is $\mathcal{O}(n^{-1})$ 
against $\mathcal{O}(n^{-2})$ ascertained in Example~\ref{exa:rightright}) but it is 
better then the one implied by Proposition~\ref{prop:rightright} ($\mathcal{O}(1)$).
\end{exa}

\end{document}